\newcommand{\R}{\mathbb{R}}
\newcommand{\E}{\mathbb{E}}
\newcommand{\pts}{\mathcal P}
\newcommand{\lines}{\mathcal L}
\newcommand{\bi}[1]{\mathcal{B}(#1)}
\newtheorem{theorem}{Theorem}[section]
\newtheorem{lemma}[theorem]{Lemma}
\newtheorem{proposition}[theorem]{Proposition}
\newcommand{\placefig}[2]
        {\includegraphics[width=#2]{#1.eps}}
\theoremstyle{definition}
\def\eps{{\varepsilon}}
\newcommand{\parag}[1]{\vspace{2mm}

\noindent{\bf #1} }
\newcommand{\prr}{{\mathrm{P\hspace{-0.05em}r}}}
\newcommand{\EE}{\mathcal{E}}
\begin{document}

\title{Local Properties via Color Energy Graphs and Forbidden Configurations\footnote{This research project was done as part of the 2018 CUNY Combinatorics REU, supported by NSF grant DMS-1710305.}}

\author{
Sara Fish\thanks{California Institute of Technology, Pasadena, CA, USA
{\sl sfish@caltech.edu}. Supported by Caltech's Summer Undergraduate Research
Fellowships (SURF) program.}
\and
Cosmin Pohoata\thanks{California Institute of Technology, Pasadena, CA, USA
{\sl apohoata@caltech.edu}.}
\and
Adam Sheffer\thanks{Department of Mathematics, Baruch College, City University of New York, NY, USA.
{\sl adamsh@gmail.com}. Supported by NSF award DMS-1710305 and PSC-CUNY award 61666-00-49.}}

\date{}
\maketitle
\begin{abstract}
The local properties problem of Erd\H os and Shelah generalizes many Ramsey problems and some distinct distances problems.
In this work, we derive a variety of new bounds for the local properties problem and its variants.
We do this by continuing to develop the color energy technique --- a variant of the concept of additive energy from Additive Combinatorics.
In particular, we generalize the concept of color energy to higher color energies, and combine these with Extremal Graph Theory results about graphs with no cycles or subdivisions of size $k$.
\end{abstract}

\section{Introduction}

Erd\H os and Shelah \cite[Section V]{Erdos74} suggested a problem concerning local properties of a graph.
Consider a complete graph $K_n=(V,E)$, a set of colors $C$, and a coloring of the edges $\chi:C \to E$.
For parameters $k$ and $\ell$, assume that every induced subgraph over $k$ vertices of $V$ contains at least $\ell$ colors.
We define $f(n,k,\ell)$ as the minimum size $C$ can have while still satisfying the above property.
For example, $f(n,3,3)$ is the minimum number of colors in an edge coloring of $K_n$ where every triangle contains three distinct colors.
In this case no vertex can be adjacent to two edges of the same color, so $f(n,3,3)\ge n-1$.
The parameters $k$ and $\ell$ are usually considered to be constants not depending on $n$.

One reason for studying the local properties problem is that it generalizes many Ramsey problems, and also some distinct distances problems.
Moreover, this problem has a rich history and attracted the interest of various well-known mathematicians.
For a survey of some of this history, see for example a previous work by the second and third authors \cite{PS18}.

The \emph{linear threshold} of the local properties problem is the smallest $\ell$ (for a given $k$) for which $f(n,k,\ell)=\Omega(n)$.
Similarly, the \emph{quadratic threshold} is the smallest $\ell$ for which $f(n,k,\ell)=\Omega(n^2)$.
Erd\H os and Gy\'arf\'as \cite{EG97} proved that the quadratic threshold is $\ell = \binom{k}{2}-\lfloor \frac{k}{2}\rfloor +2$ and that the linear threshold is $\ell = \binom{k}{2}-k +3$.
Conlon, Fox, Lee, and Sudakov \cite{CFLS14} showed that the \emph{polynomial threshold} is $\ell=k$ (the smallest $\ell$ for which $f(n,k,\ell)=\Omega(n^\eps)$ for some $\eps>0$).
Recently, a family of additional thresholds appeared in \cite{PS18}: For each integer $m\ge 2$ there exists $0\le c_m\le m+1$ such that the threshold for having $f(n,k,\ell)=\Omega\left(n^{(m+1)/m}\right)$ is $\ell = \binom{k}{2}-m\cdot \lfloor \frac{k}{m+1}\rfloor +c_m$.
Deriving this family of polynomial thresholds was based on introducing a new tool called the \emph{color energy} of a graph.
This tool is a variant of the concept of additive energy from Additive Combinatorics (for example, see \cite{TV06}).

In the current work we derive a variety of new bounds for the local properties problem and its variants.
First, we introduce another family of thresholds for $f(n,k,\ell)$.

\begin{theorem} \label{th:threshold}
(a) For any integers $2 \leq m \leq k/2$,
\[ f\left(n,k,\binom{k}{2} - m(k - m) + 2\right) = \Omega\left(n^{1/m}\right). \]

(b) For any integer $t\ge3$ and $k= \binom{t+1}{2}$,
\[ f\left(n,k,\binom{k}{2} - t(t-1) +1 \right) = \Omega\left(n^{1/2+1/(4t-6)}\right). \]
\end{theorem}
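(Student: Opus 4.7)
For part (a) I would use a direct forbidden-subgraph argument. Let $\chi$ be a valid coloring of $K_n$ using $r$ colors, and for each color $c$ let $G_c$ denote the monochromatic subgraph of color $c$. The key observation is that no $G_c$ can contain a copy of $K_{m,k-m}$: if $G_c$ contained such a $K_{m,k-m}$ on some set $S$ of $k$ vertices, then the $\binom{k}{2}$ edges inside $S$ would include $m(k-m)$ edges all of color $c$, so $S$ would span at most $\binom{k}{2}-m(k-m)+1<\ell$ distinct colors, contradicting the local property. Since $2\le m\le k-m$, the K\H{o}v\'ari--S\'os--Tur\'an theorem gives $|E(G_c)|=O(n^{2-1/m})$ for every $c$, and summing over colors yields $\binom{n}{2}=\sum_c|E(G_c)|=O(r\cdot n^{2-1/m})$, whence $r=\Omega(n^{1/m})$.

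For part (b) a single-color forbidden-subgraph bound does not suffice, because the "excess at least $t(t-1)$ on $k=\binom{t+1}{2}$ vertices" condition is equivalent to forbidding a whole family of subgraphs rather than a single one --- for example, a $1$-subdivision of $K_t$ already has exactly $t(t-1)$ edges on $\binom{t+1}{2}$ vertices and is \emph{not} itself forbidden, whereas adjoining any single extra edge on the same vertex set is. I would therefore pass through the higher color energy machinery announced in the abstract: define a higher-order color energy $E_\alpha(\chi)=\sum_c f(N_c)$, where $N_c=|E(G_c)|$ and $f$ is convex enough that Cauchy--Schwarz (or H\"older) provides a lower bound on $E_\alpha$ in terms of $n$ and $r$. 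For the upper bound, the local property forces each $G_c$ to avoid a specific monochromatic configuration --- a short cycle or a subdivision of $K_{t+1}$ with controlled parameters --- since any such configuration would fit inside $k=\binom{t+1}{2}$ vertices and contribute excess $\ge t(t-1)$. Applying the matching extremal graph theory theorem (Bondy--Simonovits for $C_6$ in the base case $t=3$, yielding $|E(G_c)|=O(n^{4/3})$, and a subdivision extremal bound for larger $t$) gives $|E(G_c)|=O(n^{3/2-1/(4t-6)})$, and summing over the $r$ colors and rearranging delivers $r=\Omega(n^{1/2+1/(4t-6)})$.

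The hard step is pinning down the precise forbidden monochromatic configuration in part (b) and matching it with an extremal graph theory theorem whose exponent is exactly $3/2-1/(4t-6)$. Because the raw "excess $\ge t(t-1)$" condition corresponds to a family of forbidden subgraphs rather than a single one, the color energy framework --- rather than a one-shot K\H{o}v\'ari--S\'os--Tur\'an application as in part (a) --- is what actually converts the local property into a clean per-color edge bound of the right order.
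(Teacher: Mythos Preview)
Your argument for part~(a) is correct and is exactly what the paper does: each color class is $K_{m,k-m}$-free, so K\H{o}v\'ari--S\'os--Tur\'an gives $|E(G_c)|=O(n^{2-1/m})$, and summing over the $r$ colors yields $r=\Omega(n^{1/m})$.

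For part~(b), however, you have misread the situation. The paper does \emph{not} use color energy for this result; it is proved in Section~\ref{sec:NoEnergy} (``Proofs with no color energy'') by precisely the same per-color scheme as part~(a), with K\H{o}v\'ari--S\'os--Tur\'an replaced by Janzer's extremal bound (Theorem~\ref{th:Subdiv}): an $H_t$-free graph on $n$ vertices has $O(n^{3/2-1/(4t-6)})$ edges, where $H_t$ is the $1$-subdivision of $K_t$. Since $H_t$ sits on exactly $k=\binom{t+1}{2}$ vertices with $t(t-1)$ edges, a monochromatic copy is the intended violation, and one sums over colors. Notice that your own final sentence already lands on $|E(G_c)|=O(n^{3/2-1/(4t-6)})$ and sums --- that \emph{is} the whole proof, and the color-energy layer you propose to route through is never actually invoked and could be deleted without loss.

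Your off-by-one observation --- that a monochromatic $H_t$ alone yields at most $\binom{k}{2}-t(t-1)+1=\ell$ colors, not strictly fewer --- is sharp, and the paper's write-up is indeed loose on exactly this point. But the remedy is not higher color energy. Either the intended constant is $+2$ (matching part~(a)), or one needs the marginally stronger statement that a color class with $Cn^{3/2-1/(4t-6)}$ edges, for large $C$, contains $H_t$ together with one further edge on the same $k$ vertices; this is a small refinement layered on Janzer's bound, not a change of method. Your proposal, by contrast, never specifies which energy, which energy graph, or which forbidden configuration therein, so as written it is not a proof of~(b) --- and the forbidden structures you float (a subdivision of $K_{t+1}$, which has $\binom{t+2}{2}>k$ vertices, or $C_6$ for $t=3$, which again gives only $\ell$ colors) do not resolve the off-by-one either.
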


In the first part of the theorem, we are studying the thresholds for $f(n,k,\ell)=\Omega(n^{1/m})$ for $m\ge 2$.
The following upper bound was obtained by Erd\H os and Gy\'arf\'as \cite{EG97}.
\begin{equation} \label{eq:ProbLower}
f\left(n,k,\ell\right) = O\left(n^{\frac{k-2}{\binom{k}{2}-\ell+1}}\right).
\end{equation}
This bound implies that the threshold for $\Omega(n^{1/m})$ is at least $\ell = \binom{k}{2} - m(k - 2) + 2$.
Thus, Theorem \ref{th:threshold}(a) establishes a tight threshold for $\Omega(n^{1/2})$.
When $m\ge 3$, there remains a gap
\[ \binom{k}{2} - m(k - 2) + 2 \le \ell \le \binom{k}{2} - m(k - m) + 2. \]

We extend the aforementioned technique of color energy, using it to derive several new bounds.
As a first example, recall that the quadratic threshold is $\ell = \binom{k}{2}-\lfloor \frac{k}{2}\rfloor +2$.
Erd\H os and Gy\'arf\'as \cite{EG97} asked what happens when we move one away from the quadratic threshold.
That is, they studied the case of $\ell = \binom{k}{2}-\lfloor \frac{k}{2}\rfloor +1$, and derived the bound $f\left(n,k,\ell\right) =\Omega(n^{4/3})$.
This was later improved in \cite{PS18} to $f\left(n,k,\ell\right) =\Omega(n^{3/2})$.
Using our extended color energy technique, we show that $f\left(n,k,\ell\right)$ becomes arbitrarily close to $n^2$ as $k$ grows.

\begin{theorem} \label{th:OneFromQuad}
For every $k \geq 8$ that is divisible by four, we have
\[ f\left(n,k,\binom{k}{2} - k/2 + 1\right) = \Omega\left(n^{2 - 8/k}\right). \]
\end{theorem}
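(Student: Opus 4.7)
The strategy is to extend the color energy approach of \cite{PS18} by passing to higher-order color energies, combined with extremal graph theory bounds as advertised in the abstract. Let $\chi : E(K_n) \to C$ be a coloring satisfying the local property of Theorem \ref{th:OneFromQuad}, and assume for contradiction that $|C| = o(n^{2-8/k})$.

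For an integer parameter $p$ that will eventually be chosen around $k/4$, define the $p$-th color energy
\[
\mathcal{E}_p(\chi) = \sum_{c \in C} |\chi^{-1}(c)|^p,
\]
which counts ordered $p$-tuples of edges of $K_n$ sharing a common color. Since $\sum_c |\chi^{-1}(c)| = \binom{n}{2}$, H\"older's inequality yields the lower bound
\[
\mathcal{E}_p(\chi) \;\geq\; \frac{\binom{n}{2}^p}{|C|^{p-1}} \;=\; \Omega\!\left( \frac{n^{2p}}{|C|^{p-1}} \right).
\]
Substituting the assumed bound on $|C|$ gives $\mathcal{E}_p(\chi) = \omega\bigl(n^{2 + 8(p-1)/k}\bigr)$. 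Specifically, for $p = k/4$ --- an integer thanks to the divisibility of $k$ by $4$ --- this reads $\mathcal{E}_p(\chi) = \omega(n^{4 - 8/k})$.

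The core of the argument is to derive an opposing upper bound $\mathcal{E}_p(\chi) = O(n^{4 - 8/k})$ from the local property. The plan is to construct an auxiliary (multi)graph $H$ on $V(K_n)$ that encodes color coincidences --- roughly, each term $|\chi^{-1}(c)|^p$ corresponds to a structured configuration (a matching of size $p$, a long walk, or the branch vertices of a small subdivision) inside $H$ --- and then invoke the local property. Any too-rich configuration of this type produces a $k$-vertex subset of $V$ with at least $k/2$ pairs of same-colored edges, yielding color deficit at least $k/2$ and violating the assumption that every $k$-set uses at least $\binom{k}{2} - k/2 + 1$ distinct colors. The K\H{o}v\'ari--S\'os--Tur\'an bound for $K_{s,s}$-free graphs, the Bondy--Simonovits bound for $C_{2s}$-free graphs, or a related bound on graphs forbidding a small subdivision, can then be applied to $H$ at the scale of $k$ vertices, and the resulting upper bound on $\mathcal{E}_p$ contradicts the lower bound above.

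The main obstacle is the design of the auxiliary graph $H$ and the choice of configuration so that the extremal bound matches the exponent $4 - 8/k$ exactly. I expect the divisibility of $k$ by $4$ to be used both to make $p = k/4$ an integer and to partition a forbidden configuration into $k/4$ identical gadgets (each using four vertices and contributing two units to the color deficit), so that a single forbidden subgraph in $H$ translates to a $k$-vertex set with deficit at least $k/2$. Making this correspondence precise --- and checking that the resulting extremal bound indeed gives the target exponent --- is the principal technical challenge.
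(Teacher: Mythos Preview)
Your proposal has the right flavor but a genuine gap: you never identify the auxiliary object on which an extremal bound can actually be proved, and your choice $p=k/4$ sends you in the wrong direction. The paper stays with the \emph{second} color energy $\E_2(G)=\sum_c m_c^2$ and builds the \emph{energy graph} $G'$ on vertex set $V\times V$ (so $n^2$ vertices), placing an edge between $(v_1,v_3)$ and $(v_2,v_4)$ whenever $\chi(v_1,v_2)=\chi(v_3,v_4)$. Then $|E'|=\E_2(G)$ exactly. A simple cycle of length $k/2$ in $G'$ has vertices $(a_1,b_1),\dots,(a_{k/2},b_{k/2})$, giving a set $S=\{a_j,b_j\}$ of at most $k$ elements of $V$ in which each edge of the cycle forces one color repetition $\chi(a_j,a_{j+1})=\chi(b_j,b_{j+1})$; after a small pruning step to handle degenerate cases, this yields $k/2$ repetitions among $k$ vertices and violates the local property. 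Hence $G'$ is $C_{k/2}$-free, and since $k/2$ is even (this is where divisibility by four is used, not to make $k/4$ an integer), the Bondy--Simonovits bound gives $\E_2(G)=O\bigl((n^2)^{1+4/k}\bigr)=O(n^{2+8/k})$. Combined with Cauchy--Schwarz $\E_2\ge n^4/|C|$, this yields $|C|=\Omega(n^{2-8/k})$.

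Your plan of taking $p=k/4$ and working with a graph on $V(K_n)$ itself does not reach the target exponent: an auxiliary graph on $n$ vertices can carry at most $O(n^{2})$ edges, and there is no evident way to read off the needed bound $\E_{k/4}=O(n^{4-8/k})$ from such a structure, nor to encode a $p$-tuple of same-colored edges as a single edge there. The decisive idea you are missing is to move to the product vertex set $V\times V$, where each \emph{edge} of the energy graph already encodes a color coincidence, and a cycle of length $k/2$ packages exactly the $k/2$ coincidences on $k$ vertices that the local property forbids. Once you set up $G'$ this way, the rest is a direct application of the even-cycle theorem.
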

From \eqref{eq:ProbLower}, we obtain
\[ f\left(n,k,\binom{k}{2} - k/2 + 1\right) = O\left(n^{2- 4/k}\right). \]

The revised energy technique can be used to obtain additional new bounds.
As another example, we derive the following result.

\begin{theorem} \label{th:ThreeCycles}
$\displaystyle  f\left(n,24,\binom{24}{2} - 15\right) = \Omega\left(n^{9/8}\right)$.
\end{theorem}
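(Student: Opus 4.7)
The plan is to use the higher color-energy technique of \cite{PS18} in the form extended earlier in the paper, together with the Bondy--Simonovits theorem, which gives $\mathrm{ex}(n, C_{16}) = O(n^{1+1/8}) = O(n^{9/8})$.

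Assume for contradiction that the coloring $\chi:E(K_n)\to C$ has the asserted local property but uses $|C| = N \leq c\, n^{9/8}$ colors for a sufficiently small $c > 0$. The goal is to exhibit a $24$-vertex subset $S\subseteq V(K_n)$ whose induced edges have total excess $\sum_i (m_i - 1) \geq 16$ (where $m_i$ is the multiplicity of the $i$-th color inside $S$), so that $S$ carries at most $276 - 16 = 260 < 261$ colors, contradicting the local hypothesis. To this end I would associate to $\chi$ a color energy graph $G^*$ on vertex set $V(K_n)$ whose edges encode (higher-order) tuples of same-colored edges of $K_n$. A convexity estimate (Cauchy--Schwarz or H\"older) applied to the color class sizes, together with the assumed upper bound on $N$, would force $|E(G^*)|$ to exceed $\mathrm{ex}(n, C_{16})$ by a sufficient margin.

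Bondy--Simonovits then produces a copy of $C_{16}$ in $G^*$; a standard supersaturation and greedy-deletion argument upgrades this to three vertex-disjoint $8$-cycles in $G^*$ spanning a total of $24$ vertices. Each such $8$-cycle in $G^*$ should correspond, by the construction of $G^*$, to $8$ monochromatic edges of $K_n$ lying inside a fixed $8$-subset of $V(K_n)$, up to a controlled set of auxiliary witness vertices. Pulling the three cycles back to $K_n$, I obtain a $24$-subset $S\subseteq V(K_n)$ in which three colors each occur on at least $8$ edges, giving excess
\[
\sum_i (m_i - 1) \;\geq\; 3(8-1) \;=\; 21 \;>\; 15,
\]
and the desired contradiction.

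The main obstacle I anticipate is in designing $G^*$ so that (a) its edge count is provably $\omega(n^{9/8})$ under the hypothesis $N = o(n^{9/8})$, and (b) every forbidden subgraph of $G^*$ faithfully lifts to a monochromatic configuration of $K_n$ inside a $24$-vertex set, without the auxiliary vertices that witness color equalities blowing the vertex count past $24$. In other words, the arithmetic identity $24 = 3 \cdot 8$ must interlock precisely with the Bondy--Simonovits exponent $1 + 1/8 = 9/8$, and carrying out the lifting step with this exact count is where the bulk of the technical effort would go.
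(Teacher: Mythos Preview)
Your proposal has the right flavor (color energy plus Bondy--Simonovits) but the construction of $G^*$ is where the whole argument lives, and the version you sketch cannot be made to work.

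You place $G^*$ on the vertex set $V(K_n)$, with $n$ vertices, and aim to forbid $C_{16}$ so that $\mathrm{ex}(n,C_{16})=O(n^{9/8})$. But there is no natural coloring-derived graph on $n$ vertices whose edge count is forced above $n^{9/8}$ merely by $|C|=o(n^{9/8})$; the complete graph already has all $\binom{n}{2}$ edges, and any subgraph you pick out by color data either has $\Theta(n^2)$ edges regardless of $|C|$ or depends on a single color class, in which case a cycle in it gives repetitions of only one color. Your lifting step then tries to manufacture three vertex-disjoint $8$-cycles from a $C_{16}$, but a single $C_{16}$ has $16$ vertices, not $24$, and the supersaturation/greedy-deletion patch would require you to control not just existence but disjointness and color-distinctness of the three cycles simultaneously --- which is exactly the kind of control you lose once you work on only $n$ vertices.

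The paper's decomposition is $24=8\times 3$ in the other order: $r=3$ is the energy level and $8$ is the cycle length. The \emph{third} energy graph $G^*$ has vertex set $V_1\times V_2\times V_3$, hence $\Theta(n^3)$ vertices; an edge joins $(v_1,v_2,v_3)$ to $(v_1',v_2',v_3')$ when $\chi(v_1v_1')=\chi(v_2v_2')=\chi(v_3v_3')$. Then $|E^*|=\Theta(\E_3(G))\ge n^6/|C|^2$ by H\"older, so if $|C|=o(n^{9/8})$ one gets $|E^*|=\omega\bigl((n^3)^{5/4}\bigr)$, exceeding $\mathrm{ex}(|V^*|,C_8)$. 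A single $C_8$ in $G^*$ involves at most $8\cdot 3=24$ vertices of $K_n$, and each of its $8$ edges contributes two color repetitions, for a total of $16>15$. The genuine technical work --- and it is substantial --- is ensuring that the $24$ underlying vertices are essentially distinct and that the $16$ repetitions are genuinely new; the paper handles this with two pruning passes on $G^*$ (bipartitioning each $V_j$, then deleting edges so that no vertex has two neighbors sharing a coordinate) followed by a step-by-step case analysis along the cycle. That is where your anticipated ``bulk of the effort'' actually sits, and it only becomes tractable once $G^*$ lives on $n^3$ vertices.
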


To the best of our knowledge, none of the previous techniques lead to a non-trivial lower bound for $\displaystyle  f\left(n,24,\binom{24}{2} - 15\right)$.
From \eqref{eq:ProbLower} we obtain $f\left(n,24,\binom{24}{2} - 15\right) = O\left(n^{11/7}\right)$.
It seems that similar new bounds could be obtained using the same methods as in the proof of Theorem \ref{th:ThreeCycles}.
However, so far we were not able to derive a family of such bounds --- each bound requires a separate technical proof.
We thus leave the further exploration of this technique for future works.

We also use our techniques to derive new bounds for the ``arithmetic'' variant of the local properties problem.
In this variant we have a set $A$ of $n$ real numbers.
We define the \emph{difference set} of $A$ as
\[ A-A = \{ a-a' :\ a,a'\in A \text{ and } a-a'> 0\}. \]
The standard definition of a difference set includes also non-positive differences.
This does not change the asymptotic size of $A-A$, and thus does not affect our problem.
On the other hand, ignoring non-positive differences makes the problem more natural and easier to study.

Let $g(n,k,\ell)$ denote the minimum size of $A-A$ in any set $A$ of $n$ real numbers that satisfies the following property:
Every subset $A'\subset A$ of size $k$ satisfies $|A'-A'|\ge \ell$.
Equivalently, this is the original local properties problem $f(n,k,\ell)$ when every vertex corresponds to an element of $A$ and the color of an edge $(a,a')$ is $|a-a'|$.

A discussion about the distinction between $f(n,k,\ell)$ and $g(n,k,\ell)$ can be found in \cite{PS18}.
Note that every lower bound for $f(n,k,\ell)$ is also a lower bound for $g(n,k,\ell)$.
Using our tools we derive significantly stronger lower bounds for $g(n,k,\ell)$.
For example, while the linear threshold of $f(n,k,\ell)$ is $\ell = \binom{k}{2}-k +3$, we get that $g(n,k,\ell)$ is super-linear also when $\ell \approx \binom{k}{2}/2$.

\begin{theorem} \label{th:ArithLower}
For all $k > r \geq 2$,
\begin{align*}
g\left(n,2rk,\binom{2rk}{2} - \binom{2k}{2}\cdot\left[ \binom{r}{2} + (r-1) \right] + 1\right) &= \Omega\left(n^{\frac{r}{r-1}\cdot \frac{k-1}{k}}\right).
\end{align*}
\end{theorem}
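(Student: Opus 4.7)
Set $n=|A|$, $D=|A-A|$, and for each positive $d$ write $m_d=|\{(a,a')\in A^2:a-a'=d\}|$, so $\sum_d m_d=\binom{n}{2}$. The approach is to lift the color energy technique of \cite{PS18} to order $r$ and feed the result into an extremal graph theory estimate of K\H{o}v\'{a}ri--S\'{o}s--Tur\'{a}n (or Bondy--Simonovits) type, as foreshadowed in the introduction.

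The natural tool is the $r$-th color energy $E_r(A) = \sum_d m_d^r$, which satisfies $E_r(A) \ge \binom{n}{2}^r/D^{r-1}$ by the power-mean inequality. Arguing by contradiction from $D \le c \cdot n^{r(k-1)/((r-1)k)}$ then forces $E_r(A)$ to be a very large polynomial in $n$, so the bulk of the energy must come from differences of comparatively high multiplicity. A dyadic pigeonhole on the $m_d$ isolates a scale $T \ge r$ and a heavy set $\mathcal{H} = \{d : T \le m_d < 2T\}$ carrying an $\Omega(1/\log n)$ fraction of the energy. I would then consider the auxiliary graph $G$ on vertex set $A$ whose edges are the pairs $\{a,a'\}$ with $|a-a'|\in\mathcal{H}$, and verify that $|E(G)|$ comfortably exceeds the $C_k \cdot n^{2-1/k}$ threshold of K\H{o}v\'{a}ri--S\'{o}s--Tur\'{a}n. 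This produces the required dense substructure --- a complete bipartite subgraph in $G$ --- whose pairs correspond to differences in $A$ of multiplicity at least $r$.

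From this substructure one extracts a $2rk$-point subset $S \subseteq A$ that violates the local property. The key enumeration is that $r$ pairs of $A$ sharing a common difference among $2r$ distinct points produce exactly $\binom{r}{2}+(r-1)$ collisions in the difference multiset: $r-1$ coming from the shared direct difference and $\binom{r}{2}$ from the identities $a_i-a_j = b_i-b_j$ forced by equal differences. Adjoining the $r-1$ extra representations of each heavy difference revealed by the substructure, and carefully budgeting points so that the total is exactly $2rk$, yields $S$ containing $\binom{2k}{2}$ such $r$-columns and therefore at least $\binom{2k}{2}\bigl[\binom{r}{2}+(r-1)\bigr]$ collisions, contradicting the local-property hypothesis on $2rk$-subsets.

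The hardest part I anticipate is this final expansion step: the $(2r-2)k$ added points must be chosen so that $|S|=2rk$ exactly, each of the $\binom{2k}{2}$ columns realizes its full $\binom{r}{2}+(r-1)$ collision quota, and no accidental coincidences among the chosen representations collapse distinct collisions onto each other. This combinatorial bookkeeping --- a substantive generalization of the single-column expansion of \cite{PS18} --- is what ultimately dictates the precise numerical threshold $\binom{2k}{2}\bigl[\binom{r}{2}+(r-1)\bigr]$ appearing in the theorem, and it is also the place where I expect to have to work hardest to push the constants through.
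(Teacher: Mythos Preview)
Your proposal has a genuine gap at exactly the point you yourself flag as hardest, and it is not a bookkeeping issue but a structural one. You work with an auxiliary graph on the $n$ points of $A$, find a dense bipartite piece via K\H{o}v\'{a}ri--S\'{o}s--Tur\'{a}n, and then try to adjoin $r-1$ extra representatives for each of its heavy differences. But a $K_{k,k}$ (or a $C_{2k}$) in that graph reveals only $k^2$ (resp.\ $2k$) heavy differences, and expanding each one independently costs $\Theta(r)$ fresh points per difference; there is no mechanism forcing those extra representatives to be shared, so you cannot fit $\binom{2k}{2}$ independent ``$r$-columns'' into a budget of $2rk$ points. The $\binom{2k}{2}\bigl[\binom{r}{2}+(r-1)\bigr]$ count only emerges when the $2rk$ points sit in a $2k\times r$ grid $v_{j,\ell}$ in which \emph{every} pair of rows $j_1,j_2$ has $v_{j_1,\ell}-v_{j_2,\ell}$ constant across $\ell$; your KST substructure does not produce this grid.

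The paper obtains the grid by lifting the problem to the $r$-th energy graph $G^*$ on $V_1\times\cdots\times V_r$ (so $\Theta(n^r)$ vertices), and then exploiting the arithmetic in a way your proposal does not. Each edge of $G^*$ carries a sign pattern in $\{+,-\}^{r-1}$ recording how the absolute values in $|v_1-v_1'|=\cdots=|v_r-v_r'|$ open; splitting $G^*$ into $2^{r-1}$ monochromatic-sign subgraphs, one can \emph{telescope} along any path, so that a $C_{2k}$ in one such subgraph is automatically a $K_{2k}$: the relation $v_{j_1,1}-v_{j_2,1}=\pm(v_{j_1,\ell}-v_{j_2,\ell})$ holds for all $j_1<j_2$, not just consecutive ones. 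This forces the $2rk$ underlying elements of $A$ to be distinct and to form exactly the grid above. Forbidding $C_{2k}$ in each sign class and applying Bondy--Simonovits on $n^r$ vertices gives $\E_r(G)=O(n^{r+r/k})$, which combined with your H\"older lower bound yields the exponent $\frac{r}{r-1}\cdot\frac{k-1}{k}$. In short: the extremal step must be run on the $n^r$-vertex energy graph, and the sign-splitting plus telescoping is the missing idea that turns a cycle into the full $\binom{2k}{2}$-edge clique you need.
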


For example, by setting $r=2$ in Theorem \ref{th:ArithLower}, we get that for every even $k\ge 4$,
\[ g\left(n,k,\binom{k}{2} - 2\cdot \binom{k/2}{2} + 1\right) = \Omega\left(n^{2-\frac{8}{k}}\right). \]
For large $k$, the expression $2\cdot \binom{k/2}{2}$ is almost half of $\binom{k}{2}$.
That is, the number of allowed difference repetitions is about half of the total number of pairs.
This behavior is very different than the behavior of $f(n,k,\ell)$, where the linear threshold occurs already when there are about $k$ repetitions.
As we increase $r$ in Theorem \ref{th:ArithLower}, the number of allowed repetitions increases while the lower bound for the number of differences decreases.

Finally, we observe a simple upper bound for $g(n,k,\ell)$.
\begin{proposition} \label{prop:ArithLower}
For every $\eps > 0$, any sufficiently large $c$ satisfies the following.
For every sufficiently large integer $k$,
\[ g\left(n,k,c \cdot k \cdot \log^{1/4 - \eps} k\right) = n\cdot 2^{O(\sqrt{\log n})}. \]
\end{proposition}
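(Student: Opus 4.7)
\textbf{Proof plan for Proposition \ref{prop:ArithLower}.}

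I would take $A$ to be a classical Behrend 3-AP-free subset of $[1,N]$ of size $n$, where $N = n\cdot 2^{O(\sqrt{\log n})}$; such a set exists by Behrend's construction and contains no non-trivial 3-term arithmetic progression. The upper bound on the difference set is then immediate, since $A-A\subseteq [1,N-1]$ gives $|A-A| \le N-1 = n\cdot 2^{O(\sqrt{\log n})}$.

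To verify the local property, fix an arbitrary subset $A'\subseteq A$ of size $k$. Since $A'$ inherits 3-AP-freeness from $A$, the task reduces to the following structural question: how small can $|A'-A'|$ be for a 3-AP-free subset of $\Z$ of size $k$? I would argue by contradiction. Suppose that $|A'-A'| < ck\log^{1/4-\eps} k$, so that $A'$ has small doubling constant $K := |A'-A'|/k$. Pl\"unnecke-Ruzsa then yields $|A'+A'| \le K^2 k$, and a quantitative Freiman-type theorem embeds $A'$ into a generalized arithmetic progression $P\subset \Z$ of low dimension and size $|P|$ controlled by a function $F(K)$ of the doubling constant. Composing with a Freiman $2$-isomorphism to $\Z$, $A'$ becomes a 3-AP-free subset of $[|P|]$ of size $k$. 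Applying a quantitative Roth-type bound of the form $r_3(M) \le M/(\log M)^{\alpha}$ then forces $|P| \ge k(\log k)^\alpha$, contradicting the Freiman upper bound $|P| \le F(K)\cdot k$ once $c$ is sufficiently large in terms of $\eps$.

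The main obstacle is to extract the precise exponent $1/4-\eps$ in the $\log$-factor. This requires a delicate balance: the best-known unconditional Freiman-Ruzsa bounds in $\Z$ produce GAPs of size super-polynomial in $K$, while Roth-type density bounds shave only logarithmic factors off the ambient length. The exponent $1/4-\eps$ comes from optimizing these two ingredients (the Roth exponent $\alpha$ together with the dependence of $F(K)$ on $K$); choosing $c = c(\eps)$ large enough absorbs the residual constants and lower-order logarithmic factors, completing the upper bound $g(n,k,ck\log^{1/4-\eps}k) = n\cdot 2^{O(\sqrt{\log n})}$.
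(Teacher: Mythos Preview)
Your global construction is exactly the paper's: take a Behrend 3-AP-free set $A\subset[1,N]$ with $N=n\cdot 2^{O(\sqrt{\log n})}$, so that $|A-A|\le N$ gives the desired upper bound. The paper then finishes in one line by quoting a known theorem (Ruzsa combined with Bloom's Roth bound): any 3-AP-free set $B$ of size $k$ satisfies $|B-B|=\Omega\bigl(k\log^{1/4-\eps}k\bigr)$. That is the entire local step.

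Where your proposal diverges is in trying to re-prove that lower bound on $|A'-A'|$, and the route you sketch does not reach the exponent $1/4-\eps$. If $|A'-A'|\le Kk$ and you invoke a Freiman-type structure theorem, the containing GAP $P$ has size $|P|\le F(K)\cdot k$ with $F(K)$ at best $\exp\bigl(K^{1+o(1)}\bigr)$ under current bounds. Feeding this into Bloom's bound $k\le |P|/(\log|P|)^{1-o(1)}$ only forces $\exp\bigl(K^{1+o(1)}\bigr)\gtrsim \log^{1-o(1)}k$, i.e.\ $K\gtrsim \log\log k$. That yields $|A'-A'|\gtrsim k\log\log k$, far short of $k\log^{1/4-\eps}k$; no amount of ``optimizing the two ingredients'' recovers the missing power of $\log k$, because the exponential loss in Freiman swallows it.

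The missing idea is to replace the full Freiman structure theorem by Ruzsa's modelling lemma: from $|A'-A'|\le Kk$ one passes (after dropping to a subset of comparable size) to a Freiman $2$-isomorphic copy inside $\Z/N\Z$ with $N\le K^{O(1)}k$, a \emph{polynomial} rather than exponential loss. Since Freiman $2$-isomorphisms preserve $3$-APs, Bloom's bound then gives $K^{-O(1)}\lesssim (\log\log N)^{O(1)}/\log N$, hence $K\gtrsim (\log k)^{1/4-o(1)}$, which is precisely the packaged statement the paper cites. So your overall architecture is right, but the key lemma you reach for is quantitatively too weak; swap Freiman for Ruzsa modelling and the argument goes through.
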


\parag{Our approach.}
Consider a graph $G = (V, E)$, a set of colors $C$, and a function $\chi: E \to C$.
For an edge $e=(v_1,v_2)\in E$, we also write $\chi(v_1,v_2) = \chi(v_2,v_1)= \chi(e)$.
We define the \emph{color energy} of $G$ as
\begin{equation} \label{eq:EnergyDeg}
\E(G) = \left|\left\{ (v_1,v_2,v_3,v_4)\in V^4 :\ \chi(v_1,v_2) = \chi(v_3,v_4) \right\} \right|.
\end{equation}

Color energy was introduced in \cite{PS18}, imitating the concept of additive energy.
Studying this quantity immediately led to new bounds for $f(n,k,\ell)$.
In the current work, we further push this technique in several different ways.
We first show how an Extremal Graph Theory bound for graphs with no cycles of length $k/2$ can be used to amplify uses of color energy.
As a warmup, we use this approach to derive Theorem \ref{th:OneFromQuad} in Section \ref{se:TwoCycles}.

We then introduce the concept of higher color energies.
The $r$-th color energy of a graph is a variant of $\E(G)$ that consists of $2r$-tuples instead of quadruples.
This concept is properly introduced in Section \ref{sec:HigherEnergy}, and is then used to prove Theorems \ref{th:ThreeCycles} and \ref{th:ArithLower}.

Section \ref{sec:NoEnergy} contains our proofs that do not rely on color energy.
In particular, it contains the proofs of Theorem \ref{th:threshold} and of Proposition \ref{prop:ArithLower}.

\parag{Acknowledgements.}
We would like to thank Yufei Zhao for a discussion that led to Proposition \ref{prop:ArithLower}.
We would also like to thank Robert Krueger and Rados Radoicic for several helpful discussions.

\section{Preliminaries}

In this section we describe several results that are used in our proofs.
We divide these results according to topics.

\parag{Extremal Graph Theory.}
The following is a classical result of K{\H o}vari, S\'os, and T\'uran (for example, see \cite[Section IV.2]{Bollobas98}).

\begin{lemma} \label{le:KST}
Let $G=(V_1\cup V_2,E)$ be a bipartite graph with $|V_1|=m$ and $|V_2|=n$.
If $G$ does not contain a copy of $K_{s,t}$, then
\[ |E| = O_{s,t}\left(mn^{1-\frac{1}{s}}+n\right). \]
\end{lemma}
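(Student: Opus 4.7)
The plan is to carry out the classical double-counting argument of K\H{o}v\'ari, S\'os, and Tur\'an. I would introduce the set
\[ T = \{(S,v) : S \subseteq V_1,\ |S|=s,\ v\in V_2,\ v \text{ is adjacent to every vertex of } S\} \]
and bound $|T|$ in two ways. The upper bound comes from the forbidden configuration: since $G$ contains no $K_{s,t}$ with the $s$ part inside $V_1$, every $s$-subset $S\subseteq V_1$ has fewer than $t$ common neighbors in $V_2$, so
\[ |T| \le (t-1)\binom{m}{s}. \]

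For the lower bound I would group the pairs of $T$ by $v\in V_2$, obtaining $|T| = \sum_{v\in V_2}\binom{d(v)}{s}$, where $d(v)$ denotes the degree of $v$ in $G$. Since $\binom{x}{s}$ is convex in $x$ and $\sum_{v \in V_2} d(v)=|E|$, Jensen's inequality yields $|T| \ge n\binom{|E|/n}{s}$. Combining with the upper bound and using the elementary estimate $\binom{x}{s} \ge (x-s+1)^s/s!$ valid for $x \ge s-1$, I would rearrange to
\[ \left(\frac{|E|}{n}-s+1\right)^s \le (t-1)\,\frac{m^s}{n}, \]
and then take $s$-th roots to conclude $|E| \le (t-1)^{1/s}\,m\,n^{1-1/s} + (s-1)n = O_{s,t}(mn^{1-1/s}+n)$.

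There is no genuine obstacle here; the only minor subtlety is the degenerate regime in which the average degree $|E|/n$ is smaller than $s-1$, so that the bound $\binom{x}{s} \ge (x-s+1)^s/s!$ cannot be applied directly. In that regime one has $|E| \le (s-1)n = O(n)$ trivially, which is already absorbed by the additive $+n$ term in the statement of the lemma. I would simply dispatch this case separately at the start to keep the main chain of inequalities clean. The choice of placing the $s$ part of $K_{s,t}$ in $V_1$ (rather than $V_2$) is dictated by which asymmetric bound we want to extract, and is the reason the exponent $1-1/s$ sits on $n$ in the conclusion.
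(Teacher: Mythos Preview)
Your argument is the standard K\H{o}v\'ari--S\'os--Tur\'an double-counting proof and is correct, including your handling of the degenerate regime $|E|/n<s-1$. The paper itself does not supply a proof of this lemma; it simply quotes it as a classical result and refers the reader to Bollob\'as, so there is nothing further to compare.
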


A classical result about graphs with no cycles of a given length was originally stated by Erd\H os \cite{Erd63} without proof.
For an elegant proof, see Naor and Verstraete \cite{NV05}.

\begin{theorem} \label{th:Girth}
The following holds for every $k\ge 2$.
Any graph with $n$ vertices that does not contain $C_{2k}$ as a subgraph has $O\left(n^{1+1/k}\right)$ edges.
\end{theorem}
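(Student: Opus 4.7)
The plan is to prove this classical bound by the contrapositive: assuming $G$ has $n$ vertices and at least $Cn^{1+1/k}$ edges for a sufficiently large constant $C=C(k)$, I will produce a copy of $C_{2k}$. The approach I would attempt follows the BFS-based argument of Naor and Verstraete.

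First I would perform two standard reductions. Passing to a bipartite subgraph costs only a factor of $2$ in the edge count and preserves $C_{2k}$-freeness (any subgraph of a $C_{2k}$-free graph remains $C_{2k}$-free). Then iteratively deleting any vertex of degree less than $m/(2n)$ removes fewer than $m/2$ edges before the process terminates, so the surviving subgraph $H$ is nonempty and has minimum degree $\delta \geq m/(2n) = \Omega(n^{1/k})$. After these reductions I may assume $G$ itself is bipartite with $\delta(G) = \Omega(n^{1/k})$.

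Next I would root a BFS at an arbitrary vertex $v_0$ and write $L_i$ for the set of vertices at distance exactly $i$ from $v_0$. The goal of the BFS phase is to show that under the $C_{2k}$-free hypothesis the level sizes expand multiplicatively: roughly $|L_{i+1}| \gtrsim \delta \cdot |L_i|$ for $0 \leq i < k$, so that $|L_k| = \Omega(\delta^k)$, which exceeds $n$ once $C$ is chosen large enough. By bipartiteness the neighbors of any $u \in L_i$ lie in $L_{i-1} \cup L_{i+1}$, so the $\delta$ edges leaving $u$ are split between these two sets; a careful accounting should convert the $C_{2k}$-free hypothesis into a bound on edges going ``backward'' into $L_{i-1}$ and on common neighbors within $L_{i+1}$, forcing most of the $\delta |L_i|$ edges from $L_i$ to go forward to \emph{distinct} vertices of $L_{i+1}$.

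Finally, once $|L_k|$ is forced to exceed $n$, a pigeonhole argument on length-$k$ walks in the BFS tree produces a vertex $u \in L_k$ reachable from $v_0$ by two internally disjoint paths of length exactly $k$; concatenating them yields $C_{2k}$. The main obstacle I anticipate is the middle step: controlling the number of shared neighbors between vertices at adjacent BFS levels in a way that forbids $C_{2k}$ but permits the expansion argument. The hypothesis forbids only the specific cycle length $2k$---not all short even cycles---so one must pair up ``collisions'' at complementary heights $i$ and $k-i$ of the BFS so that the two path segments combine into a cycle of length exactly $2k$ rather than some other even length. This delicate bookkeeping is the technical heart of the Naor--Verstraete proof.
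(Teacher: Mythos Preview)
The paper does not prove Theorem~\ref{th:Girth}; it is quoted in the preliminaries as a classical result, attributed to Erd\H os with a pointer to Naor--Verstra\"ete~\cite{NV05} for a proof. There is therefore no argument in the paper to compare your proposal against.

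As a standalone sketch your outline follows the standard BFS strategy and is broadly on the right track, but the logic in your final paragraph is tangled. If the expansion $|L_{i+1}|\gtrsim \delta\,|L_i|$ genuinely held for all $i<k$ under the $C_{2k}$-free hypothesis, then $|L_k|>n$ would already be a contradiction---the levels are disjoint subsets of $V$---and no separate pigeonhole on walks would be needed. In the actual Bondy--Simonovits and Naor--Verstra\"ete arguments the cycle is produced \emph{inside} the expansion step: one shows that whenever a level fails to grow by the required factor, the overlap among forward neighbors yields two BFS paths of complementary lengths summing to $2k$, which concatenate to a $C_{2k}$. You correctly flag this pairing as the crux, but your sketch does not supply it, so what you have written is a plan rather than a proof.
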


Given a graph $G=(V,E)$, a \emph{subdivision} of $G$ is the following bipartite graph.
The first vertex set of the subdivision contains a vertex for every element of $V$ and the second vertex set contains a vertex for every edge of $E$.
Every edge of the subdivision is between an edge $e\in E$ and a vertex $v\in V$ such that $v$ is an endpoint of $e$.
Let $H_t$ be the subdivision of $K_t$.
The following is a recent result of Janzer \cite{Janzer18}.

\begin{theorem} \label{th:Subdiv}
Let $t\ge 3$.
Any graph with $n$ vertices that contains no copy of $H_t$ has $O\left(n^{3/2-1/(4t-6)}\right)$ edges.
\end{theorem}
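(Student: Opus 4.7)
The plan is to follow the approach of Janzer. We argue by contradiction: suppose $G$ has $n$ vertices and $m \geq C n^{3/2 - 1/(4t-6)}$ edges for a sufficiently large constant $C = C(t)$, and aim to embed $H_t$. Recall that embedding $H_t$ amounts to selecting $t$ \emph{branch} vertices $v_1,\ldots,v_t$ together with $\binom{t}{2}$ pairwise internally-disjoint paths of length exactly two, one between each pair $v_i,v_j$.

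First, I would regularize $G$ by iteratively deleting vertices of degree less than half the current average, producing a subgraph $G'$ in which every vertex has degree $\Theta(d)$ with $d = \Theta(m/n) = \Omega(n^{1/2 - 1/(4t-6)})$, at the cost of only a constant fraction of the edges. Then I would count cherries: the number of paths of length two in $G'$ is $\sum_v \binom{d(v)}{2} = \Theta(nd^2)$, and also equals $\sum_{u \ne w} c(u,w)$, where $c(u,w)$ denotes the codegree. A dyadic pigeonhole on the codegrees isolates a positive proportion of the cherries concentrated at some codegree scale $s$. Call a pair $(u,w)$ \emph{heavy} if $c(u,w) \geq s$, and let $H$ be the auxiliary graph on $V(G')$ whose edges are the heavy pairs.

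The crux is to locate $t$ vertices that are pairwise heavy, with $s$ large enough, say $s \geq 2\binom{t}{2}$: once such $v_1,\ldots,v_t$ are in hand, one greedily selects, for each of the $\binom{t}{2}$ pairs, a common neighbor distinct from all branch vertices and from previously used middle vertices, producing the desired copy of $H_t$. A naive application of Lemma \ref{le:KST} to $H$ only forces $d = O(n^{1/2})$, which corresponds to the classical Bondy-Simonovits bound $m = O(n^{3/2})$ and therefore does not suffice.

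The main obstacle, and the source of Janzer's improvement, lies in sharpening this last step. The key is to analyse the finer structure of codegrees: if many heavy pairs share common middle vertices, one obtains dense $C_4$-like or theta-graph configurations that can be assembled into $H_t$ more efficiently; if instead the codegree contributions are dispersed, then a direct greedy or probabilistic extraction locates the branch vertices. Balancing these two regimes via a higher-moment or iterative convexity argument on the codegree distribution is expected to yield exactly the exponent $3/2 - 1/(4t-6)$, and making this balance quantitatively rigorous is the technical heart of the proof.
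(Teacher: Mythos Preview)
The paper does not prove Theorem~\ref{th:Subdiv}; it is quoted in the Preliminaries section as a recent result of Janzer~\cite{Janzer18} and then used as a black box in the proof of Theorem~\ref{th:threshold}(b). There is therefore no proof in the paper to compare your proposal against.

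Setting that aside and reading your proposal on its own terms, it is not a proof but an outline, and you acknowledge this explicitly: your last paragraph says that ``making this balance quantitatively rigorous is the technical heart of the proof,'' and that is precisely the part you do not carry out. Everything preceding it --- regularizing to near-constant degree, counting paths of length two, dyadically bucketing codegrees, forming the auxiliary heavy-pair graph, and observing that a direct application of Lemma~\ref{le:KST} only recovers the $O(n^{3/2})$ bound --- is standard setup that does not go beyond the classical $C_4$-free or even-cycle framework. The actual content of Janzer's argument, the mechanism that squeezes out the extra saving of $1/(4t-6)$ in the exponent, is entirely absent from your write-up. The vague dichotomy you describe (``dense $C_4$-like configurations'' versus ``dispersed codegrees'') is suggestive but not an argument: you do not specify what quantity is being balanced, what inequality is being exploited, or why the threshold lands at $3/2 - 1/(4t-6)$ rather than any other value. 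As it stands, the proposal does not establish the theorem, and the missing piece is not a routine calculation but the whole innovation of~\cite{Janzer18}.
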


\parag{Probabilistic method.}
We now describe several lemmas that are based on standard probabilistic arguments (see for example \cite{AS04}).
We denote the expectation of a random variable $X$ as $\EE[X]$ (unfortunately, both $E$ and $\E$ are already in use).

\begin{lemma}\label{le:bipartite}
Consider a graph $G=(V,E)$ with $|V|=n\ge 100$.
Then $V$ can be partitioned into disjoint sets $V_1,V_2$ such that $|V_1|=\lceil n/2\rceil, |V_2|=\lfloor n/2 \rfloor$, and at least $|E|/3$ of the edges of $E$ do not have both of their endpoints in the same $V_j$.
\end{lemma}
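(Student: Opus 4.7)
The plan is to use a straightforward probabilistic argument. I would sample a partition $(V_1, V_2)$ uniformly at random from the $\binom{n}{\lceil n/2 \rceil}$ partitions of $V$ that meet the size constraints $|V_1| = \lceil n/2 \rceil$ and $|V_2| = \lfloor n/2 \rfloor$. This automatically handles the size condition, so the only thing to estimate is the expected number of cut edges.

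For an arbitrary edge $e = (u,v) \in E$, I would compute the probability that $u$ and $v$ land in different parts. A direct count of the favorable partitions gives
\[ \Pr[e \text{ is cut}] \;=\; \frac{2 \binom{n-2}{\lceil n/2\rceil - 1}}{\binom{n}{\lceil n/2\rceil}} \;=\; \frac{2 \lceil n/2\rceil \lfloor n/2\rfloor}{n(n-1)}. \]
Since $\lceil n/2\rceil \lfloor n/2\rfloor \ge (n^2-1)/4$, this probability is at least $(n+1)/(2n) > 1/2$, which is comfortably above $1/3$ for every $n \ge 100$ (and indeed for every $n \ge 2$).

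By linearity of expectation, the expected number of cut edges is at least $|E|/2 > |E|/3$, so there must exist a partition meeting both the size and cut-size requirements. There is no real obstacle here; the only subtlety is to sample under the prescribed size constraint rather than assigning each vertex independently (which would only give the sizes in expectation), but once one uses the uniform constrained distribution the calculation is immediate.
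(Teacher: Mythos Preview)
Your proposal is correct and follows essentially the same approach as the paper: sample a size-constrained partition uniformly at random, bound the cut probability of a single edge, and apply linearity of expectation. Your version is in fact a bit sharper, since you compute the exact cut probability $2\lceil n/2\rceil\lfloor n/2\rfloor/(n(n-1)) > 1/2$, whereas the paper merely asserts that this probability exceeds $1/3$ for $n\ge 100$.
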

\begin{proof}
We uniformly choose a partition of $V$ among the set of partitions satisfying $|V_1|=\lceil n/2\rceil$ and $|V_2|=\lfloor n/2 \rfloor$.
Consider an edge $e=(v,u)\in E$ and let $X_e$ be the indicator random variable stating whether the endpoints of $e$ are not both in the same $V_j$.
Since $n\ge 100$, it can be easily verified that $\prr[X_e] > 1/3$.
This in turn implies that $\EE[X_e] = \prr[X_e] > 1/3$.
Let $X$ denote the number of edges of $E$ that do not have both of their endpoints in the same $V_j$.
That is, $X= \sum_{e\in E} X_e$.
By linearity of expectation,
\[ \EE[X] = \sum_{e\in E} \EE[X_e] > |E|/3. \]

Since the expected size of $X$ is larger than $|E|/3$, there exists at least one partition for which $X>|E|/3$.
\end{proof}

We also derive a more unusual variant of the preceding lemma.

\begin{lemma}\label{le:Rpartite}
Consider an integer $r\ge 2$, a graph $G=(V,E)$ with $|V|=n$ for a sufficiently large $n$, and a set $T\subset E^r$.
Then $V$ can be partitioned into disjoint sets $V_1,\ldots,V_r$, each of size $\lceil n/r\rceil$ or $\lfloor n/r \rfloor$, such that $\Omega_r(|T|)$ of the tuples of $T$ consist only of edges having both of their endpoints in the same $V_j$.
\end{lemma}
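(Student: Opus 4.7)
The plan is to mirror the probabilistic argument of Lemma \ref{le:bipartite}, but to sample a partition of $V$ into $r$ equitable parts and to analyze whole tuples from $T$ rather than single edges. Specifically, I would uniformly sample a partition $V=V_1\sqcup\cdots\sqcup V_r$ with each $|V_j|\in\{\lceil n/r\rceil,\lfloor n/r\rfloor\}$. For each tuple $t=(e_1,\ldots,e_r)\in T$, let $X_t$ be the indicator of the event that each edge $e_i$ has both of its endpoints inside a single part $V_{j_i}$, where the index $j_i$ is allowed to depend on $i$. The core of the argument is establishing a uniform lower bound $\prr[X_t=1]=\Omega_r(1)$.

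Fix a tuple $t=(e_1,\ldots,e_r)$ and let $H_t$ be the (simple) graph whose vertex set consists of the endpoints of the $e_i$ and whose edge set is $\{e_1,\ldots,e_r\}$. Denote by $v$ the number of vertices of $H_t$ and by $c$ its number of connected components. Since $H_t$ has at most $r$ distinct edges, $v\le 2r$, and because any graph with $v$ vertices and $c$ components contains at least $v-c$ edges, we also get $v-c\le r$. The event $X_t=1$ is exactly the event that every connected component of $H_t$ lies inside a single part. A direct hypergeometric computation shows that, for $n$ sufficiently large in terms of $r$, any fixed set of $s\le 2r$ vertices lies inside one prescribed part with probability at least $(1/(2r))^{s-1}$. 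Conditioning sequentially on the placements of the connected components of $H_t$ then gives
\[
\prr[X_t=1]\ \ge\ \left(\frac{1}{2r}\right)^{v-c}\ \ge\ \left(\frac{1}{2r}\right)^{r}\ =\ \Omega_r(1).
\]

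By linearity of expectation, $\EE\bigl[\sum_{t\in T}X_t\bigr]=\Omega_r(|T|)$, so at least one outcome of the random partition gives $\Omega_r(|T|)$ tuples of $T$ whose edges all satisfy the monochromaticity condition, which is exactly the assertion of the lemma. The main obstacle is that edges within a single tuple may share vertices or even coincide, so the $r$ monochromaticity events are correlated and cannot simply be multiplied out. Viewing the tuple as the subgraph $H_t$ and measuring the ``number of binding constraints'' as $v-c$ handles this uniformly, since this quantity is bounded by the number of edges $r$ independent of the overlap pattern.
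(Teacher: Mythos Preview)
Your proposal is correct and follows essentially the same approach as the paper: a uniformly random equitable $r$-partition, an indicator $X_t$ for each tuple, a uniform $\Omega_r(1)$ lower bound on $\prr[X_t=1]$, and linearity of expectation. The only difference is that the paper bounds $\prr[X_t=1]$ by the cruder event ``all at most $2r$ endpoints land in $V_1$'' to obtain $(4r)^{-2r}$, whereas your connected--component analysis yields the sharper $(2r)^{-r}$; this improves the implicit constant but does not change the argument.
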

\begin{proof}
We uniformly choose a partition $V_1,\ldots,V_r$ of $V$ among the set of partitions into $r$ parts of size $\lceil n/r\rceil$ or $\lfloor n/r \rfloor$.
Consider an $r$-tuple $t\in T$ and let $X_t$ be the indicator random variable stating whether every edge in $t$ has both of its endpoints in the same $V_j$ (different edges of $t$ may be in different parts).
Note that $t$ is defined using at most $2r$ vertices of $V$, and $X_t =1$ when all of these vertices are in the same part (this is a much stronger condition).
We claim that the probability of $2r$ vertices being in $V_1$ is at least $(4r)^{-2r}$.
Indeed, the total number of partitions of $V$ is $n!/((n/r)!)^{r}$ and the number of partitions where all $2r$ vertices are in $V_1$ is $(n-2r)!((n/r)!)^{-r+1}(n/r-2r)^{-1}$ (assuming for simplicity that $r$ divides $n$).
When $n$ is sufficiently large, the ratio between these two numbers is larger than $(4r)^{-2r}$.

By the above, for a sufficiently large $n$ we have $\prr[X_t] > (4r)^{-2r}$.
This in turn implies that $\EE[X_t] = \prr[X_t] > (4r)^{-2r}$.
Let $X$ denote the number of $r$-tuples of $T$ that contain only edges with both their endpoints in the same $V_j$.
That is, $X= \sum_{t\in T} X_t$.
By linearity of expectation,
\[ \EE[X] = \sum_{t\in T} \EE[X_t] > |T|(4r)^{-2r}. \]

Since the expected size of $X$ is larger than $|T|(4r)^{-2r}$, there exists at least one partition for which $X>|T|(4r)^{-2r}$.
\end{proof}

\section{Color energy with no cycles} \label{se:TwoCycles}

This section presents our simplest use of color energy --- the proof of Theorem \ref{th:OneFromQuad}.
This proof demonstrates how color energy can be combined with results concerning graphs with no cycles of a given length.
We first recall the statement of Theorem \ref{th:OneFromQuad}.
\vspace{2mm}

\noindent {\bf Theorem \ref{th:OneFromQuad}.}
\emph{For every $k \geq 8$ that is divisible by four, we have}
\[ f\left(n,k,\binom{k}{2} - k/2 + 1\right) = \Omega\left(n^{2 - 8/k}\right). \]
\begin{proof}
Consider a complete graph $K_n$ denoted as $G = (V, E)$, a set of colors $C$, and a function $\chi: E \to C$, such that every induced $K_{k}$ contains at least $\binom{k}{2} - k/2 + 1$ colors.
Consider also the color energy $\E(G)$, as defined in \eqref{eq:EnergyDeg}.
For a color $c\in C$, we set
\[ m_c = \left|\left\{(v_1,v_2)\in V^2 :\ v_1\neq v_2 \text{ and } \chi(v_1,v_2)=c \right\}\right|. \]
The number of quadruples $(v_1,v_2,v_3,v_4)\in V^4$ that satisfy $\chi(v_1,v_2) = \chi(v_3,v_4)=c$ is $m_c^2$.
This implies that
\[ \E(G) = \sum_{c\in C} m_c^2. \]

Since every ordered pair of distinct vertices in $V^2$ contributes to exactly one $m_c$, we get that $\sum_{c\in C}m_c = n(n-1)$. Combining the above with the Cauchy--Schwarz inequality leads to
\begin{equation} \label{eq:LowerEnergy}
\E(G) = \sum_{c\in C} m_c^2 \ge \frac{\left(\sum_{c\in C}m_c\right)^2}{|C|} = \frac{n^2(n-1)^2}{|C|}.
\end{equation}
The lower bound \eqref{eq:LowerEnergy} implies that, to obtain a lower bound on $|C|$ it suffices to derive an upper bound for $\E(G)$.

We define the \emph{energy graph} of $K_n$ to be the graph $G'=(V',E')$ defined as follows.
The set of vertices is $V' = V\times V$, including pairs where the same vertex appears twice.
An edge between $(v_1,v_3),(v_2,v_4)\in V'$ is in $E'$ if and only if $\chi(v_1,v_2)=\chi(v_3,v_4)$.
Note that $\E(G) = |E'|$.
Thus, we reduced the problem to deriving an upper bound for the number of edges in the energy graph of $K_n$.

We perform two stages of pruning $E'$.
First, we remove the $n(n-1)$ loops of the form $((a,b), (a,b))\in E'$ for some $(a,b) \in V'$.
This turns $G$ into a simple graph.
If this step removes more than half of the edges of $E'$, we have $\E(G) = |E'| = O(n^2)$.
In this case, \eqref{eq:LowerEnergy} implies $|C|=\Omega(n^2)$ and completes the proof.
We may thus assume that at most half of the edges of $E'$ were removed.

Recall that every edge $e\in E'$ corresponds to a pair of edges of $E$ that have the same color $c\in C$.
We associate $e$ with the color $c$.
For every color $c\in C$ that has fewer than $100k^2$ edges associated with it, we remove from $E'$ every edge that is associated with $c$.
If $\Omega\left(n^{2 - \frac{8}{k}}\right)$ colors of $C$ have $100k^2$ edges associated with them, then $|C|=\Omega\left(n^{2 - \frac{8}{k}}\right)$, which complete the proof.
We may thus assume that $O\left(n^{2 - \frac{8}{k}}\right)$ edges were removed from $E'$.
Once again, this does not change the asymptotic size of $E'$.

Since the above pruning steps did not change the asymptotic size of $|E'|$, we still have that $\E(G) = O(|E'|)$.
To bound $|E'|$, we wish to apply Theorem \ref{th:Girth} on $G'$.
For this purpose, we assume for contradiction that $G'$ contains a simple cycle $\gamma$ of length $k/2$.
In the statement of Theorem \ref{th:OneFromQuad}, $k$ is required to be divisible by four since Theorem \ref{th:Girth} only holds for cycles of even length.
We write $\gamma = (a_1, b_1), \cdots, (a_{k/2}, b_{k/2})$, where the vertices $a_1,\ldots, a_{k/2},b_1,\ldots,b_{k/2}\in V$ may not be distinct.
Let $S$ be the set of these vertices, so $|S|\le k$.

For some intuition, we first consider the case where $S$ consists of $k$ distinct vertices of $V$.
For every $1\le j \le k/2$, the edge between $(a_j,b_j)$ and $(a_{j+1},b_{j+1})$ implies that $\chi(a_j,a_{j+1})=\chi(b_{j},b_{j+1})$ (where $a_{k/2+1} = a_1$ and $b_{k/2+1}=b_1$).
This in turn implies that the number of distinct colors spanned by the vertices of $S$ is at most $\binom{k}{2}-k/2$.
We obtained a contradiction to the assumption that every $k$ vertices of $V$ span at least $\binom{k}{2}-k/2+1$ colors.

We next consider the general case, where $S$ might contain fewer than $k$ vertices of $V$.
We go one-by-one over the edges of $\gamma$.
In particular, in the $j$-th step we consider the edge between $(a_j,b_j)$ and $(a_{j+1},b_{j+1})$ (as before, $a_{k/2+1} = a_1$ and $b_{k/2+1}=b_1$).
At each step, we have $\chi(a_j,a_{j+1})=\chi(b_{j},b_{j+1})$ and this is either a new color repetition or a repetition we already counted in one of the previous steps.
If we are in the latter case, that means that both $a_j$ and $b_j$ already appeared in previous edges of the cycle.

Let $m$ mark the number of steps in which we did not find a new color repetition.
In other words, there are at least $k/2-m$ distinct color repetitions.
In each of the $m$ steps without a new repetition we also had two repeating vertices, so $|S|\le k-2m$.
Let $c=\chi(a_1,a_2)$.
We add to $S$ the endpoints of $m$ more edges with color $c$, and note that $|S|\le k$.
This is always possible, since by the graph pruning $c$ has at least $100k^2$ edges associated with it.
If necessary, we add to $S$ additional arbitrary vertices until it is of size $k$.
Since the vertices of $S$ span at most $\binom{k}{2}-k/2$ colors, we again obtain a contradiction.

The above contradiction implies that the pruned energy graph $G'$ does not contain a cycle of length $k/2$.
By Theorem \ref{th:Girth}, we obtain
\[ \E(G) = O(|E'|) = O\left(\left(n^{2}\right)^{1+4/k}\right) = O\left(n^{2+8/k}\right). \]
Combining this with \eqref{eq:LowerEnergy} implies the asserted bound $|C|=\Omega(n^{2-8/k})$.
\end{proof}

\section{Higher color energies}\label{sec:HigherEnergy}

In this section we study higher color energies.
As before, consider a copy of $K_n$ denoted as $G = (V, E)$, a set of colors $C$, and a function $\chi: E \to C$.
For an integer $r\ge 2$, we define the $r$-\textit{th color energy} of the graph as
\[ \E_r(G) = \left|\left\{ (a_1, a_2, \cdots, a_{2r} )\in V^{2r},\ \chi(a_1,a_2) = \chi(a_3,a_4) = \cdots = \chi(a_{2r-1},a_{2r}) \right\}\right|. \]

As before, for a color $c\in C$ we set $m_c = \left|\left\{(v_1,v_2)\in V^2 :\ v_1\neq v_2 \text{ and } \chi(v_1,v_2)=c \right\}\right|$.
We also recall that $\sum_{c\in C}m_c = n(n-1)$.
The number of $2r$-tuples that contribute to $\E_r(G)$ and correspond to the color $c$ is exactly $m_c^r$.
This implies that $\E_r(G) = \sum_{c \in C} m_c^r$.
By H\"older's inequality,
\begin{equation} \label{eq:LowerEnergyHigher}
\E_r(G) =  \sum_{c \in C} m_c^r \geq \frac{\left(\sum_{c \in C} m_c\right)^r}{\left(\sum_{c \in C}1\right)^{r-1}}  = \frac{n^{r}(n-1)^r}{|C|^{r-1}}.
\end{equation}

Note that the ``standard'' color energy $\E(G)$ is the second color energy $\E_2(G)$.
By \eqref{eq:LowerEnergyHigher}, to obtain a lower bound for the number of colors it suffices to derive an upper bound for $E_r(G)$, for some $r\ge 2$.

We now remove some edges from $E$ and update $\E_r(G)$ accordingly.
That is, after removing an edge $(u,v)\in E$, we also ignore all of the $2r$-tuples that involve $\chi(u,v)$.
By Lemma \ref{le:Rpartite}, there exist a partition of $V$ into $r$ disjoint subsets $V_1,\ldots, V_r$, each of size $\Theta(n)$, with the following property.
When removing from $E$ every edge that does not have both of its endpoints in the same $V_j$, the size of $\E_r(G)$ does not change asymptotically.
We indeed remove from $G$ every such edge.

An $r$\emph{-th energy graph} of $G$, denoted $G^*=(V^*,E^*)$, is defined as follows (each such graph corresponds to a different partition of $V$ into $V_1,\ldots, V_r$).
The set of vertices is $V^* = V_1\times V_2 \times \cdots \times V_r$.
An edge between $(v_1,\ldots,v_r),(v'_1,\ldots,v'_r)\in V_1\times \cdots \times V_r$ is in $E^*$ if and only if $\chi(v_1,v'_1)=\chi(v_2,v'_2)=\cdots=\chi(v_r,v'_r)$.
Note that $\E_r(G) = \Theta(|E^*|)$.
Thus, to obtain a lower bound for the number of colors, we can derive an upper bound on the number of edges in an $r$-th energy graph of $K_n$.
An $r$-th energy graph is not a simple graph since each of its $n^r$ vertices forms a loop.
These loops correspond in $\E_r(G)$ to tuples of the form $(a_1,\ldots,a_r,a_1,\ldots,a_r)$.
We will assume that every $r$-th energy graph is simple by removing these $n^r$ loops.
The number of removed edges is negligible and does not affect any of our proofs.

Finally, we remove ``unpopular'' colors from $G^*$, as follows.
Every edge $e\in E^*$ corresponds to several edges of $E$ that have the same color.
We say that $e$ also has this color.
For every color $c\in C$ that appears in $E$ at most $\log n$ times, we remove from $E^*$ every edge that is associated with $c$.
Note that every such color is associated with at most $\log^r n$ edges of $E^*$.
Since $|C|=O(n^2)$, this step removes $O(n^2\log^r n)$ edges from the $r$-th energy graph.
This number is too small to have an effect on any of our proofs.

We are now ready to prove our lower bound for $g(n,k,\ell)$.
\vspace{2mm}

\noindent {\bf Theorem \ref{th:ArithLower}.}
\emph{For all $k > r \geq 2$,}
\begin{align*}
g\left(n,2rk,\binom{2rk}{2} - \binom{2k}{2}\cdot\left[ \binom{r}{2} + (r-1) \right] + 1\right) &= \Omega\left(n^{\frac{r}{r-1}\cdot \frac{k-1}{k}}\right).
\end{align*}
\begin{proof}
Let $A$ be a set of $n$ real numbers such that every subset $A'\subset A$ of size $2rk$ satisfies $|A'-A'|\ge \binom{2rk}{2} - \binom{2k}{2}\cdot\left[ \binom{r}{2} + (r-1) \right] + 1$.
Let $G= (V,E)$ be a copy of $K_n$ with a vertex corresponding to each element of $A$.
We associate a color with each element of $A-A$ and color an edge $(a,a')$ with the color associated with $|a-a'|$ (recall that we define $A-A$ as containing only positive differences).
Let $C$ be the set of colors and note that $|C|=|A-A|$.
By \eqref{eq:LowerEnergyHigher}, to obtain a lower bound for $|C|$ it suffices to derive an upper bound for $\E_r(G)$.
Let $G^*=(V^*,E^*)$ be an $r$-th energy graph of $G$.
By the discussion before this proof, it suffices to derive an upper bound on $|E^*|$.

Consider an edge $((v_1,\ldots,v_r),(v'_1,\ldots,v'_r))\in E^*$.
Thinking of the vertices $v_j\in V$ as their corresponding elements in $A$, we have $|v_1-v'_1| = |v_2-v'_2| = \cdots = |v_r-v'_r|$.
We associate this edge with a sequence of $r-1$ symbols from $\{+,-\}$, as follows (that is, we associate the edge with an element of $\{+,-\}^{r-1}$).
For every $2\le j \le r$, the $(j-1)$-th element of the sequence is `+' if $v_1-v'_1 = v_j-v'_j$, and `-' if $v_1-v'_1 = v'_j-v_j$.
That is, the associated symbol encodes how to remove the absolute values from $|v_1-v'_1| = |v_2-v'_2| = \cdots = |v_r-v'_r|$.

We partition $G^*$ into $2^{r-1}$ graphs, as follows.
Each graph contains the same set of vertices $V^*$, and each graph corresponds to one of the $2^{r-1}$ sequences of $\{+,-\}^{r-1}$.
A graph that corresponds to a specific sequence $s\in \{+,-\}^{r-1}$ contains the edges of $E^*$ that are associated with $s$.
Note that every edge of $G^*$ corresponds to exactly one of the $2^{r-1}$ graphs.
Thus, to obtain an upper bound for $|E^*|$ it suffices to bound the number of edges in each of these graphs.

Let $H = (V^*,E_H)$ be one of the $2^{r-1}$ graphs constructed in the preceding paragraph.
Assume for contradiction that $H$ contains a cycle $\gamma$ of length $2k$, and denote the vertices of $\gamma$ as
\[ (v_{1,1}, \ldots, v_{1,r}), (v_{2,1}, \ldots, v_{2,r}), \ldots, (v_{2k,1}, \ldots, v_{2k,r} ). \]

Recall that every edge of $E_H$ corresponds to the same symbol $s\in \{+,-\}^{r-1}$.
By the definition of an edge in an energy graph, for every $1\le j \le 2k$ we have
\begin{equation} \label{eq:arithCycleEdge}
v_{j,1} - v_{j+1,1} = \pm( v_{j,2} - v_{j+1,2}) = \cdots = \pm(v_{j,r} - v_{j+1,r}),
\end{equation}
where $v_{2k+1,\ell}=v_{1,\ell}$ and each $\pm$ is replaced with either `+' or `-' according to $s$.

By the definition of $H$, the $\pm$ symbols are replaced in the same way for every edge of $\gamma$.
For any $1\le j_1 < j_2 \le 2k$, summing \eqref{eq:arithCycleEdge} for $j_1 \le j < j_2$ yields
\begin{equation}\label{eq:arithCycleNonEdge}
v_{j_1,1} - v_{j_2,1} = \pm(v_{j_1,2} - v_{j_2,2}) = \cdots = \pm (v_{j_1,r} - v_{j_2,r}).
\end{equation}

By the above, the vertices of the cycle $\gamma$ form a clique $K_{2k}$ in $G^*$.
We next claim that the $2kr$ vertices $v_{j,\ell}\in V$ used to define the vertices of $\gamma$ are all distinct.
By the definition of $V_1,\ldots,V_r$, if $j\neq j'$ then $v_{j,\ell}$ and $v_{j',\ell'}$ must correspond to different elements of $V$.
Assume for contradiction that $v_{j,\ell}=v_{j,\ell'}$ for some $\ell\neq \ell'$.
By \eqref{eq:arithCycleNonEdge} with $j_1=\ell$ and $j_2=\ell'$, we obtain that $(v_{\ell,1}, \ldots, v_{\ell,r}) = (v_{\ell',1}, \ldots, v_{\ell',r})$.
This contradicts $\gamma$ being a simple cycle, which implies that the $2kr$ vertices $v_{j,\ell}\in V$ are indeed distinct.

Consider the set $S$ consisting of the $2kr$ vertices $v_{j,\ell}\in V$ used to define the vertices of $\gamma$.
By the preceding paragraph $|S|=2kr$.
By \eqref{eq:arithCycleEdge}, for each of the $\binom{2k}{2}$ choices for $j_1$ and $j_2$ we have $r-1$ distinct color repetitions.
Consider one such repetition $v_{j_1,\ell} - v_{j_2,\ell} = v_{j_1,\ell'} - v_{j_2,\ell'}$ with $\ell\neq \ell'$ and note that it leads to a second repetition $v_{j_1,\ell} - v_{j_1,\ell'} = v_{j_2,\ell} - v_{j_2,\ell'}$ (if instead we start with $v_{j_1,\ell} - v_{j_2,\ell} =  v_{j_2,\ell'} - v_{j_1,\ell'}$ then we have the second repetition $v_{j_1,\ell} - v_{j_2,\ell'} =  v_{j_2,\ell} - v_{j_1,\ell'}$).
Thus, for each of the $\binom{2k}{2}$ choices for $j_1$ and $j_2$ we actually have $r-1 + \binom{r}{2}$ distinct color repetitions.
This contradicts the local property assumption, so $H$ does not contain a cycle of length $2k$.

Since $H$ does not contain a cycle of length $2k$, Theorem \ref{th:Girth} implies
\[ |E_H| = O\left(|V^*|^{1+1/k}\right) = O\left(n^{r+r/k}\right).  \]
Recall that $E^*$ is partitioned into $2^{r-1}$ subsets, each satisfying the above upper bound for $|E_H|$.
We thus have
\[ \E_r(G) = \Theta\left(|E^*|\right) = O\left(n^{r+r/k}\right). \]

Combining this upper bound for $\E_r(G)$ with \eqref{eq:LowerEnergyHigher} gives
\[ |A-A| = |C|=\Omega\left(n^{\frac{r}{r-1}\cdot \frac{k-1}{k}}\right). \]
\end{proof}

We also rely on higher color energy to prove the following result.
\vspace{2mm}

\noindent {\bf Theorem \ref{th:ThreeCycles}.}
$\displaystyle  f\left(n,24,\binom{24}{2} - 15\right) = \Omega\left(n^{9/8}\right)$.
\begin{proof}
Let $G=(V,E)$ be a copy of $K_n$, let $C$ be a set of colors, and let $\chi:E \to C$, such that every copy of $K_{24}$ in $G$ has at least $\binom{24}{2} - 15$ distinct colors.
Let $G^* = (V^*,E^*)$ be a third energy graph of $G$.

Assume for contradiction that there exists a vertex $v\in V$ adjacent to at least 17 edges of color $c\in C$.
Let $S$ be a set consisting of $v$, of 17 vertices that form with $v$ an edge of color $c$, and of six arbitrary additional vertices of $V$.
Then $S$ is a set of 24 vertices of $V$ that span at most $\binom{24}{2} - 16$ colors of $C$.
This contradicts the local property, so no vertex of $V$ can be adjacent to 17 edges of the same color.

\parag{Pruning.} We perform two steps of pruning $E^*$.
First, for every $1\le j \le 3$ we partition $V_j$ into two disjoint sets $V'_j$ and $V''_j$ and discard from $E^*$ every edge that has both of its endpoints containing a vertex from $V'_j$ or both of its endpoints containing a vertex from $V''_j$.
By imitating the proof of Lemma \ref{le:bipartite}, we get that this can be done without asymptotically changing the size of $E^*$.

\begin{figure}[h]
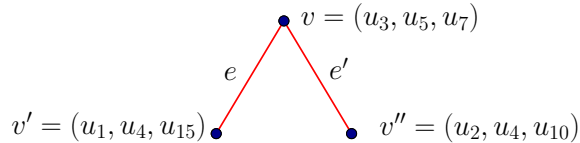

\centerline{\placefig{CommonNeighbor}{0.46\textwidth}}
\vspace{-1mm}

\caption{\small \sf After deciding to keep $e$, we need to remove $e'$ because of the common coordinate $u_4$. }
\label{fi:CommonNeighbor}
\vspace{-2mm}
\end{figure}

In our second pruning step, we throw from $E^*$ edges until $G^*$ satisfies the following property: For every $v\in V^*$, no two neighbors of $v$ have the same value in one of their three coordinates.
In particular, we repeatedly choose an edge $e\in E^*$ that will remain in the graph and then remove every edge $e'\in E^*$ that violates the above condition together with $e'$.
Figure \ref{fi:CommonNeighbor} depicts a situation where after deciding to keep $e=(v,v')$, we need to remove $e'=(v,v'')$, since $v$ and $v'$ have the common coordinate $u_4\in V$.
We now show that this process does not asymptotically decrease the size of $E^*$.

Consider an edge $e=(v,v')$ that we decided to keep in $E^*$.
Write $v=(u_1,u_2,u_3)$ and $v'=(u'_1,u'_2,u'_3)$, where $u_1,u_2,u_3,u'_1,u'_2,u'_3\in V$.
Let $c= \chi(u_1,u'_1)=\chi(u_2,u'_2)=\chi(u_3,u'_3)$.
Assume for contradiction that $v$ has 17 neighbors of the form $(u'_1,u'_2,*)$, where the $*$ could be any vertex of $V_3$.
Since each of these 17 neighbors has a different vertex replacing the $*$, and each such vertex $w\in V_3$ must satisfy $\chi(u_3,w)=c$, we get that $u_3$ is adjacent to 17 edges of color $c$ in $G$.
This contradicts the above, so $v$ has at most 16 neighbors of the form $(u'_1,u'_2,*)$

We now assume that $v$ has more than $16^2$ neighbors of the form $(u'_1,*,*)$, where the $*$ symbols represent any $w_2\in V_2$ and $w_3\in V_3$, respectively.
By the preceding paragraph, after fixing $w_2$ there are at most 16 options for $w_3$.
Thus, there are at least 17 distinct values for $w_2$.
As in the preceding paragraph, this implies that $u_2$ is adjacent to at least 17 edges of color $c$.
This contradiction implies that $v$ has at most $16^2$ neighbors of the form $(u'_1,*,*)$.
Similarly, $v$ has at most $16^2$ neighbors of the form $(*,u'_2,*)$ and at most $16^2$ neighbors of the form $(*,*,u'_3)$.
Therefore, after keeping an edge in $E^*$ we remove at most $3\cdot 16^2$ edges adjacent to $v$.
Symmetrically, we remove at most $3\cdot 16^2$ edges adjacent to $v'$.
We conclude that the second pruning step does not change the asymptotic size of $E^*$.

\parag{Cycles in the pruned energy graph.}
Assume for contradiction that $G^*$ contains a cycle $\gamma$ of length eight.
We denote the vertices of $\gamma$ as $(a_1, b_1, c_1), \cdots, (a_8, b_8, c_8)$.
We create a set of vertices $S\subset V$ in eight steps, where during the $j$'th step we add $a_j,b_j,c_j$ to $S$ (it is possible that some of these vertices were already placed in $S$ in a previous step).
We now show that at each step we can add at most three vertices to $S$ and obtain at least two new color repetitions in the subgraph induced by $S$.
When beginning the $j$-th step, if at least two of the vertices $a_j,b_j,c_j$ are not already in $S$ then $\chi(a_j,a_{j+1})=\chi(b_j,b_{j+1})=\chi(c_j,c_{j+1})$ yields two new color repetitions.
As usual, we set $a_9=a_1, b_9=b_1,$ and $c_9=c_1$.

At the beginning of the first step all three vertices $a_1,b_1,c_1$ are new, since $S$ is empty.
Recalling the partitioning of each of $V_1,V_2,V_3$ into two disjoint sets, we note that $a_2,b_2,c_2$ are all new, since they cannot be identical to vertices from the first step.
Recalling also that the neighbors of a vertex of $V^*$ cannot have any identical coordinates, we get that $a_3,b_3,c_3,a_4,b_4,c_4$ are also all new.
That is, in the first four steps we place 12 vertices in $S$ and have eight distinct color repetitions.

If at step $j$ exactly one of $a_j,b_j,c_j$ is not already in $S$, then the edge involving this new vertex yields one new color repetition.
Since we only add to $S$ one new vertex from $a_j,b_j,c_j$, we are allowed to add two additional vertices.
We take an arbitrary edge of color $\chi(a_1,a_2)$ that is not already in the subgraph induced by $S$, and add both of the endpoints of this edge to $S$.
Such an edge is always available by the assumption that every color appears at least $\log n$ times (see the above definition of an $r$-th energy graph).
Note that in this case we indeed added at most three new vertices to $S$ and at least two new color repetitions to the subgraph induced by $S$.

In the fifth step, at least one of the vertices $a_5,b_5,c_5$ is not in $S$ yet.
Indeed, since each of the sets $V_1,V_2,V_3$ is bipartite, these vertices cannot be equivalent to vertices from steps with even indices.
Since neighbors with common coordinates are not allowed, $a_5\neq a_3, b_5\neq b_3,$ and $c_5\neq c_3$.
Finally, it is impossible to have $(a_1,b_1,c_1) = (a_5,b_5,c_5)$ since then we would have a vertex of $V^*$ repeating twice in $\gamma$, implying that $\gamma$ is not a simple cycle.
A similar argument shows that there is at least one new vertex of $V$ also in the sixth, seventh, and eighth steps.
This concludes the construction of $S$.

If after the above process $S$ still has fewer than 24 vertices, we keep adding arbitrary vertices until $|S|=24$.
By the above, the subgraph induced by $S$ has at least 16 color repetitions.
In other words, this subgraph contains at most $\binom{24}{2}-16$ distinct colors.
This contradicts the local property, so $G^*$ cannot contain a cycle of length eight.
Theorem \ref{th:Girth} implies
\[ \E_3(G) = \Theta\left(|E^*|\right)=O\left(|V^*|^{5/4}\right) = O\left(n^{15/4}\right). \]
Combining this with \eqref{eq:LowerEnergyHigher} (when $r=3$) yields $|C|=\Omega(n^{9/8})$, as asserted.

\end{proof}

\section{Proofs with no color energy} \label{sec:NoEnergy}

This section contains proofs where we do not rely on color energy.
We repeat each result before proving it.
\vspace{2mm}

\noindent {\bf Theorem \ref{th:threshold}.}
\emph{(a) For any integers $2 \leq m \leq k/2$,}
\[ f\left(n,k,\binom{k}{2} - m(k - m) + 2\right) = \Omega\left(n^{1/m}\right). \]

\emph{(b) For any integer $t\ge3$ and $k= \binom{t+1}{2}$,  }
\[ f\left(n,k,\binom{k}{2} - t(t-1) +1 \right) = \Omega\left(n^{1/2+1/(4t-6)}\right). \]

\begin{proof}
(a) Consider a copy of $K_n = (V,E)$, a set of colors $C$, and a function $\chi:E\to C$, such that every copy of $K_k$ in the graph contains at least $\binom{k}{2} - m(k - m) + 2$ colors.
Assume for contradiction that there exists a color $c\in C$ such that $\Omega(n^{2-1/m})$ edges $e\in E$ satisfy $\chi(e)=c$ (with a sufficiently large constant in the $O(\cdot)$-notation).
Let $E' \subset E$ be the set of edges with color $c$.

By Lemma \ref{le:bipartite}, we can partition the vertices of $V$ into two disjoint sets $V_1,V_2$ each of size $\Theta(n)$ such that $\Theta(|E'|)$ of the edges of $E'$ have one endpoint in $V_1$ and one in $V_2$.
Let $E^*\subset E'$ denote the set of edges with one endpoint in each set.
Then $G=(V_1,V_2, E^*)$ is a bipartite graph with $\Theta(n)$ vertices in each part and $\Omega\left(n^{2-1/m}\right)$ edges.
Since we started with a sufficiently large constant in the $\Omega(\cdot)$-notation, by Lemma \ref{le:KST} we get that $G$ contains a copy of $K_{m,k-m}$.

From the preceding paragraph, we have that the original colored $K_n$ contains a copy of $K_{m,k-m}$ with all of its edges having color $c$.
This is a set of $k$ vertices with at most $\binom{k}{2} - m(k-m)+1$ distinct colors.
Since this contradicts the local property of the coloring, we conclude that every color of $C$ appears $O\left(n^{2-1/m}\right)$ times in $k_n$.
This immediately implies that $|C|=\Omega\left(n^{1/m}\right)$.

(b) The proof is obtained by repeating the proof of part (a), while replacing the use of Lemma \ref{le:KST} with Theorem \ref{th:Subdiv}.
That is, we assume for contradiction that there exists a color $c\in C$ such that $\Omega\left(n^{3/2-1/(4t-6)}\right)$ edges $e\in E$ satisfy $\chi(e)=c$.
We then obtain a contradiction by showing that the colored $K_n$ contains a copy of $H_t$ with all of its edges having the color $c$.
A copy of $H_t$ consists of $\binom{t+1}{2}$ vertices and $t(t-1)$ edges.
\end{proof}

For the following result, see for example \cite[Theorem 9.1 of Chapter 2]{Rusza09} combined with \cite{Bloom16}.

\begin{theorem}\label{th:No3Term}
Let $\eps>0$ and let $A$ be a set of $n$ elements that contains no 3-term arithmetic progression.
Then $|A-A|=\Omega\left(n \cdot \log^{1/4-\eps} n\right)$.
\end{theorem}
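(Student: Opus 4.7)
The plan is to prove Theorem~\ref{th:No3Term} by combining three classical tools from additive combinatorics: the Pl\"unnecke--Ruzsa inequality, Ruzsa's modeling (Freiman-embedding) lemma from \cite{Rusza09}, and Bloom's quantitative Roth bound \cite{Bloom16}. Assume for contradiction that $A$ is 3-AP-free with $|A|=n$ and write $|A-A| = Kn$; the goal is to derive the lower bound $K \geq \log^{1/4-\eps} n$ for sufficiently large $n$.

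First, by the Pl\"unnecke--Ruzsa inequality applied to $A$, we have $|sA - tA| \leq K^{s+t} n$ for all integers $s,t \geq 0$; in particular, $|2A - 2A| \leq K^4 n$. Next, I would invoke Ruzsa's modeling lemma: there exists a subset $A_0 \subseteq A$ with $|A_0| \geq n/2$ that is Freiman 2-isomorphic to a subset $A_0'$ of an interval $\{1,2,\ldots,N\}$ (or of $\mathbb{Z}/N\mathbb{Z}$) with $N = O(|2A - 2A|) = O(K^4 n)$. Since the 3-AP relation $a+c = b+b$ is an equation balancing two sums of length $2$, Freiman 2-isomorphisms preserve 3-AP-freeness; hence $A_0'$ is also 3-AP-free.

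Applying Bloom's theorem to $A_0'$ yields $|A_0'| \leq C N (\log\log N)^4 / \log N$. Combining with $|A_0'| \geq n/2$ and $N = O(K^4 n)$, and using that $\log(K^4 n) = \Theta(\log n)$ and $\log\log(K^4 n) = O(\log\log n)$ whenever $K \leq n^{o(1)}$, the inequality rearranges to
\[ K^4 \geq c \cdot \frac{\log n}{(\log\log n)^4} \]
for some absolute constant $c > 0$. Taking fourth roots, $K \geq (\log n)^{1/4}/\log\log n$, which exceeds $(\log n)^{1/4-\eps}$ for any fixed $\eps > 0$ and all sufficiently large $n$. Hence $|A - A| = Kn = \Omega(n \log^{1/4-\eps} n)$, as claimed.

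The main obstacle is invoking the modeling lemma with the right exponent: the $K^4$ bound on $|2A-2A|$ from Pl\"unnecke--Ruzsa must line up with the $(\log\log N)^4$ factor in Bloom's bound to produce the $1/4$ exponent in the conclusion, and one must verify that Freiman 2-isomorphism (and not a higher order) suffices to transport 3-AP-freeness from $A_0$ to $A_0'$. Any slack in the modeling exponent is absorbed by the $\eps$ in the statement, and any future improvement to either Pl\"unnecke--Ruzsa or Bloom's bound would immediately sharpen Theorem~\ref{th:No3Term}.
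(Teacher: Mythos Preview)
Your proposal is correct and is precisely the argument the paper has in mind. The paper does not actually give a proof of Theorem~\ref{th:No3Term}; it simply cites it as the combination of \cite[Theorem 9.1 of Chapter 2]{Rusza09} with \cite{Bloom16}. Your write-up unpacks exactly that combination: Pl\"unnecke--Ruzsa to bound $|2A-2A|\le K^4 n$, Ruzsa's modeling lemma to pass to a Freiman $2$-isomorphic copy inside an interval of length $O(K^4 n)$, the observation that $a+c=b+b$ is preserved under Freiman $2$-isomorphism so $3$-AP-freeness transfers, and then Bloom's quantitative Roth bound to force $K^4 \gg \log n/(\log\log n)^4$. One small point worth making explicit: the assumption $K\le n^{o(1)}$ you use to simplify $\log(K^4 n)$ is harmless, since if $K\ge n^{\delta}$ for any fixed $\delta>0$ the conclusion $K\ge \log^{1/4-\eps} n$ is immediate.
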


We are now ready to prove our upper bound for $g(n,k,\ell)$.
\vspace{2mm}

\noindent {\bf Proposition \ref{prop:ArithLower}.}
\emph{For every $\eps > 0$, any sufficiently large $c$ satisfies the following.
For every sufficiently large integer $k$,}
\[ g\left(n,k,c \cdot k \cdot \log^{1/4 - \eps} k\right) = n2^{O(\sqrt{\log n})}. \]
\begin{proof}
Behrend \cite{Behrend46} proved that there exists a set $A$ of $n$ positive integers such that $|A-A|= n\cdot 2^{O(\sqrt{\log n})}$ and no three elements of $A$ form an arithmetic progression.
Let $B \subseteq A$ satisfy $|B| = k$.
Since $B$ does not contain a 3-term arithmetic progression and $k$ is sufficiently large, Theorem \ref{th:No3Term} gives
\[ |B - B| =\Omega(k \log^{1/4-\eps} k) \le c \cdot k \cdot \log^{1/4-\eps} k, \]
where the last transition holds for sufficiently large $c$.
This construction immediately implies the asserted bound.
\end{proof}

\end{document}